\newtheorem{theorem}{Theorem}
\newtheorem{proposition}[theorem]{Proposition}
\newtheorem*{remark*}{Remark}
\newcommand{\nats}{\mathbb{N}}
\newcommand{\natswith}{\nats_{0}}
\newcommand{\values}{\mathcal{X}}
\newcommand{\lexp}{\underline{E}}
\newcommand{\lexpW}{\lexp_\mathrm{W}}
\newcommand{\lexpV}{\lexp_\mathrm{V}}
\newcommand{\gambles}{\mathcal{L}} 
\newcommand{\paths}{\Omega}
\newcommand{\path}{\omega} 
\newcommand{\proc}{\mathcal{F}}
\newcommand{\sel}{\mathcal{S}}
\newcommand{\sels}{\mathbb{S}}
\DeclareMathOperator{\cyl}{cyl}
\newcommand{\init}{\square}
\newcommand{\precedes}{\sqsubseteq} 
\newcommand{\sprecedes}{\sqsubset}
\newcommand{\reals}{\mathbb{R}}
\newcommand{\martin}{\mathcal{M}}
\newcommand{\martins}{\underline{\mathbb{M}}}
\newcommand{\martinsb}{\martins_{\mathrm{b}}}
\begin{document}
\title[Continuity of imprecise stochastic processes for monotone sequences]{Continuity of imprecise stochastic processes\\ with respect to the pointwise convergence\\ of monotone sequences}
\author{Jasper De Bock}
\address{Ghent University, IDLab,
  Technologiepark -- Zwijnaarde 914, 9052 Zwijnaarde, Belgium}
\email{jasper.debock@ugent.be}
\author{Gert de Cooman}

\begin{abstract}
 We consider the joint lower expectation of a finite-state imprecise stochastic process, defined using either the Ville-Vovk-Shafer natural extension or the Williams natural extension. In both cases, we show that it is continuous with respect to the pointwise convergence of non-decreasing sequences of real-valued functions $f_n$, $n\in\natswith$, where each $f_n$ is $n$-measurable. For the Ville-Vovk-Shafer natural extension, a similar result is shown to hold for non-increasing sequences, provided that they converge to a bounded function.
\end{abstract}

\maketitle

\section{Preliminaries}
\label{sec:prelim}

Consider an infinite sequence $X_1\dots,X_n,\dots$ of states, where, for every time point $n\in\nats$,\footnote{We use $\nats$ to denote the set of all natural numbers (without zero); if zero is included, we write $\natswith\coloneqq\nats\cup\{0\}$.} the state $X_n$ takes values $x_n$ in a finite set $\values_n$. A real-valued function on $\values_n$ is called a gamble on $\values_n$. We use $\gambles(\values_n)$ to denote the set of all such gambles. A finite initial sequence of states $x_1 \ldots x_n\in\times_{i=1}^n\values_i$ is called a situation. We also allow for $n=0$, which corresponds to the (abstract) initial situation $\init$. Generic situations are denoted by $s$, $s'$ or $u$. For any situation $s$, we denote its length by $\ell(s)$. For example: $\ell(\init)=0$ and, for $s=x_1\ldots x_n$, $\ell(s)=n$. If $s'$ is an initial segment of $s$, in the sense that there are $k,n\in\natswith$ such that $k\leq n$, $s=x_1\dots x_n\in\values^n$ and $s'=x_1\dots x_k$, we write $s'\precedes s$. If $s'\precedes s$ and $s'\neq s$, we write $s'\sprecedes s$.

For every situation $s$, we are given a local lower expectation functional---a lower envelope of expectation operators or, equivalently, a superlinear functional that dominates the infimum---$\underline{Q}(\cdot\vert s)$ on $\gambles(\values_{\ell(s)+1})$. For every gamble $f$ on $\values_{\ell(s)+1}$, the corresponding lower expectation is denoted by $\underline{Q}(f\vert s)$.



A path $\path$ is an infinite sequence of states $x_1 \ldots x_n \ldots \in\times_{i\in\nats}\values_i$. We take the possibility space $\paths$ to be the set of all paths. 
For every $\path=x_1\ldots x_n\ldots\in\paths$ and every $n\in\natswith$, we also consider the situation $\path_n\coloneqq x_1\ldots x_n$, with $\path_0=\init$ as a special case. Furthermore, for every situation $s$, the cylinder set $\smash{\cyl(s)\coloneqq\{\path\in\paths\colon\path_{\ell(s)}=s\}}$ is the set of all paths that have $s$ as their initial segment. Note that $\cyl(\init)=\paths$. An extended real-valued\footnote{This means that it takes values in $\reals\cup\{-\infty,+\infty\}$.} function $f$ on $\paths$ is said to be $n$-measurable if $f(\path)=f(\path')$ for all $\path,\path'\in\paths$ such that $\path_n=\path'_n$ or, equivalently, if for all situations $s\in\values^n$, $f$ is constant on $\cyl(s)$.

A process $\proc$ is a function defined on all situations. If $\proc$ is extended real-valued, we can associate with it an extended real-valued function $\limsup\proc$ on $\paths$, defined for all $\path\in\paths$ by
\vspace{-7pt}
\begin{align}\label{eq:deflimsup}
\limsup\proc(\path)
\coloneqq
\limsup_{n\rightarrow+\infty}\proc(\path_n)
=\lim_{n\rightarrow+\infty}\sup_{m\geq n}\proc(\path_m)=\inf_{n\in\natswith}\sup_{m\geq n}\proc(\path_m)
.
\end{align}
A process that maps each situation $s$ to a gamble on $\smash{\values_{\ell(s)+1}}$ is called a selection, and will be denoted by $\sel$. With every such selection, we associate a real-valued process $\proc^\sel$, defined recursively by
\begin{equation*}
\proc^\sel(\init)\coloneqq0
\text{ and }
\proc^\sel(sx)\coloneqq\proc^\sel(s)+\sel(s)(x)
\text{ for all situations $s$ and all $x\in\values_{\ell(s)+1}$.}
\end{equation*}
Hence, for every $\path=x_1\ldots x_n\ldots\in\paths$ and $n\in\natswith$, we find that
\begin{equation}\label{eq:Fnclosedform}
\proc^\sel(\path_n)=\sum_{i=1}^{n}\sel(\path_{i-1})(x_i).
\end{equation}
Conversely, with any real-valued process $\proc$, we associate a selection $\Delta\proc$, called the difference process, defined by
\vspace{4pt}
\begin{equation*}
\Delta\proc(s)(x)\coloneqq\proc(sx)-\proc(s)
\text{~for every situtation $s$ and any $x\in\values_{\ell(s)+1}$,}
\vspace{5pt}
\end{equation*}
for which we know that $\martin=\martin(\init)+\proc^{\Delta\martin}$. A selection $\sel$ is called almost-desirable when
\vspace{-4pt}
\begin{equation*}
\underline{Q}(\sel(s)\vert s)\geq0
\text{ for all situations $s$.}
\vspace{4pt}
\end{equation*}
The set of all almost-desirable selections is denoted by $\sels$. 
A real process $\martin$ for which  $\Delta\martin\in\sels$ is called a submartingale. The set of all submartingales is denoted by $\martins$.
A submartingale $\martin$ is said to be bounded above if there is some $B\in\reals$ such that, for every situation $s$, $\martin(s)\leq B$. The set of all submartingales that are bounded above is denoted by $\martinsb$.

\section{Joint lower expectations}

We now have all the terminology needed to define the joint lower expectations that we are interested in. For every extended real-valued function $f$ on $\paths$ and every situation $u$, the Williams natural extension is given by
\begin{equation}\label{eq:Williams}
\lexpW(f\vert u)\coloneqq
\sup\{
\martin(u)
\colon 
\martin\in\martinsb, n\in\natswith \text{ and }
f(\path)\geq\martin(\path_n) 
\text{ for all $\path\in\cyl(u)$}
\}
\end{equation}
and the Ville-Vovk-Shafer natural extension~\cite{cooman2015:markovergodic} is given by
\begin{equation}\label{eq:Ville}
\lexpV(f\vert u)\coloneqq
\sup\{
\martin(u)
\colon 
\martin\in\martinsb \text{ and }
f(\path)\geq\limsup\martin(\path) 
\text{ for all $\path\in\cyl(u)$}
\}.
\end{equation}
Of these two natural extensions, the Williams natural extension is the more conservative one.
\begin{proposition}\label{prop:WversusV}
For any situation $u$ and any extended real-valued function $f$ on $\paths$, we have that $\lexpV(f\vert u)\geq\lexpW(f\vert u)$.
\end{proposition}
\begin{proof}
Because of to Equations~\eqref{eq:Williams} and~\eqref{eq:Ville}, it clearly suffices to show that for any submartingale $\martin\in\martinsb$ and any $n\in\natswith$, there is a submartingale $\martin^{*}\in\martinsb$ for which $\martin(\path_n)=\limsup\martin^{*}(\path)$ for all $\path\in\paths$. So consider any $\martin\in\martinsb$ and any $n\in\natswith$. 
We let $\martin^{*}$ be the unique real proces such that $\martin^{*}(\init)=\martin(\init)$ and
\begin{equation}\label{eq:selstar1}
\Delta\martin^{*}(s)
\coloneqq
\begin{cases}
\Delta\martin(s) & \text{if $\ell(s)<n$}\\
0 & \text{otherwise}
\end{cases}
\text{~~~for all situations $s$.}
\end{equation}
Since $\martin^*$ is clearly bounded from above and $\Delta\martin^{*}$ is clearly almost-desirable, we have that $\martin^*\in\martinsb$.
Consider now any $\path=x_1\ldots x_n\ldots\in\paths$. Then for all $k\geq n$, we have that
\begin{equation*}
\martin^{*}(\path_k)
=\martin^{*}(\init)+\sum_{i=1}^{k}\Delta\martin^{*}(\path_{i-1})(x_i)
=\martin(\init)+\sum_{i=1}^{n}\Delta\martin(\path_{i-1})(x_i)
=\martin(\path_n)
,
\end{equation*}
where the second equality is a consequence of Equation~\eqref{eq:selstar1} and the fact that $\ell(\path_{i-1})=i-1$. Hence, using Equation~\eqref{eq:deflimsup}, we find that
\vspace{2pt}
\begin{equation*}
\limsup\martin^{*}(\path)
=
\limsup_{k\rightarrow+\infty}\martin^{*}(\path_k)
=\martin(\path_n),
\end{equation*}
as desired.
\end{proof}
\noindent
For gambles that are $n$-measurable, with $n\in\natswith$, both extensions coincide.
\begin{proposition}\label{prop:nWequalV}
For any situation $u$, any $n\in\natswith$ and any extended real-valued function $f$ on $\paths$ that is $n$-measurable, we have that $\lexpV(f)=\lexpW(f)$.
\end{proposition}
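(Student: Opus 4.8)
The plan is to invoke Proposition~\ref{prop:WversusV} for the inequality $\lexpV(f)\geq\lexpW(f)$, so that only $\lexpV(f)\leq\lexpW(f)$ remains to be shown. By the definition~\eqref{eq:Ville}, it suffices to prove that every $\alpha\in\reals$ for which there is some $\sel\in\sels$ with $f-\alpha\geq\limsup\proc^\sel$ already satisfies $\alpha\leq\lexpW(f)$. Comparing this with~\eqref{eq:Williams}, I would establish it by proving the stronger, finite-horizon statement $f-\alpha\geq\proc^\sel_n$ for the \emph{very same} $\sel$ and with $n$ itself as the horizon; this immediately exhibits $\alpha$ as admissible in~\eqref{eq:Williams}, whence $\alpha\leq\lexpW(f)$, and taking the supremum over all such $\alpha$ yields $\lexpV(f)\leq\lexpW(f)$.

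The engine of the argument is the following observation, which converts the asymptotic bound into a finite-horizon one: for any $\sel\in\sels$ and any situation $s$ there is a path $\path\in\cyl(s)$ along which $\proc^\sel$ never decreases after $s$. I would build such a path greedily. Whenever $t\follows s$ is a situation already reached, almost-desirability gives $\lexp(\sel(t)\vert t)\geq0$; since $\lexp(\sel(t)\vert t)\leq\max_{x\in\values}\sel(t)(x)$ (a lower expectation never exceeds the maximum of its argument), we get $\max_{x\in\values}\sel(t)(x)\geq0$, so there is at least one $x\in\values$ with $\sel(t)(x)\geq0$. Extending the path by such an $x$ at every step produces some $\path\in\cyl(s)$ for which each increment $\sel(\path_{i-1})(x_i)$ with $i>\ell(s)$ is nonnegative. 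By Eq.~\eqref{eq:Fnclosedform}, the partial sums $\proc^\sel(\path_k)$ are then nondecreasing for $k\geq\ell(s)$, so that $\limsup\proc^\sel(\path)\geq\proc^\sel(s)$ by Eq.~\eqref{eq:deflimsup}.

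Finally I would combine this with the $n$-measurability of $f$. Fix $\path\in\paths$ and set $s\coloneqq\path_n$, so that $\ell(s)=n$. Applying the observation to this $s$ yields some $\path^*\in\cyl(s)$ with $\limsup\proc^\sel(\path^*)\geq\proc^\sel(s)$. Since $f$ is $n$-measurable and $\path^*_n=s=\path_n$, we have $f(\path^*)=f(\path)$, and therefore $f(\path)-\alpha=f(\path^*)-\alpha\geq\limsup\proc^\sel(\path^*)\geq\proc^\sel(s)=\proc^\sel(\path_n)=\proc^\sel_n(\path)$, where the first inequality is the assumed bound evaluated at $\path^*$ and the last equality is Eq.~\eqref{eq:Fnclosedform}. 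As $\path$ was arbitrary, $f-\alpha\geq\proc^\sel_n$, which is exactly what was needed.

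The step I expect to carry the real weight is the passage from the limsup-bound to the finite-horizon bound, that is, the greedy construction of a nondecreasing path. Its two ingredients are essential: almost-desirability (through $\lexp(\sel(t)\vert t)\geq0$ and the domination of lower expectations by the maximum) guarantees a nonnegative continuation at every situation, while $n$-measurability guarantees that both $f$ and $\proc^\sel_n$ are constant on each cylinder $\cyl(s)$ with $\ell(s)=n$, so that a single favourable path $\path^*$ suffices to control the bound on the whole of $\cyl(s)$. Everything else is bookkeeping with Eqs.~\eqref{eq:deflimsup} and~\eqref{eq:Fnclosedform}.
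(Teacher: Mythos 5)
Your proposal is correct and follows essentially the same route as the paper: the key step in both is the greedy construction, via almost-desirability, of a path in $\cyl(\path_n)$ along which $\proc^\sel$ never decreases, combined with the $n$-measurability of $f$ to transfer the limsup bound on that path back to a finite-horizon bound on all of $\cyl(\path_n)$. The only cosmetic difference is that you argue directly with each admissible $\alpha$ in Eq.~\eqref{eq:Ville} rather than with an $\epsilon$-near-optimal selection, which slightly streamlines the bookkeeping.
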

\begin{proof}
Consider any situation $u$, any $n\in\natswith$ and any extended real-valued function $f$ on $\paths$ that is $n$-measurable. Then due to Proposition~\ref{prop:WversusV}, we are left to prove that $\lexpW(f\vert u)\geq\lexpV(f\vert u)$. 
Let $m\coloneqq\max\{n,\ell(u)\}$. By Equation~\eqref{eq:Williams}, it is enough to show that, for all $\alpha\in\reals$ such that $\alpha<\lexpV(f\vert u)$:
\begin{equation}\label{eq:VequalWhulpvgl1} 
\exists\martin\in\martinsb
\colon\martin(u)\geq\alpha\text{~and~}
f(\path)\geq\martin(\path_m)\text{~for all $\path\in\cyl(u)$}. 
\end{equation}
So fix any $\alpha\in\reals$ such that $\alpha<\lexpV(f\vert u)$. By Equation~\eqref{eq:Ville}, there is some $\martin\in\martinsb$ such that
\vspace{-6pt}
\begin{equation}\label{eq:VequalWhulpvgl2}
\martin(u)\geq\alpha\text{~and~}
f(\omega)\geq\limsup\martin(\omega)\text{~for all $\path\in\cyl(u)$}.
\vspace{4pt}
\end{equation}
Consider any $\path\in\cyl(u)$ and let $s_{m}\coloneqq\path_{m}$. Since $\martin\in\martinsb$, we know that $\Delta\martin$ is almost-desirable, which implies that $\underline{Q}(\Delta\martin(s_{m})\vert s_{m})\geq0$ and therefore, because $\underline{Q}(\cdot\vert s_{m})$ is a lower expectation functional, it is not possible for $\Delta\martin(s_{m})$ to be uniformly negative. Hence, there is some $x_{m+1}^*\in\values_{m+1}$ such that $\Delta\martin(s_{m})(x_{m+1}^*)\geq0$. Now consider the situation $s_{m+1}\coloneqq s_m x_{m+1}^*$. Then similarly, there is an $\smash{x_{m+2}^*\in\values_{m+2}}$ such that $\smash{\Delta\martin(s_{m+1})(x_{m+2}^*)\geq0}$. Continuing in this way, we construct a path\footnote{In doing so, we implicitly adopt the Axiom of Dependent Choice.}
$\path^*=s_mx^*_{m+1}x^*_{m+2}\ldots$ for which, for all $i\geq m$, $\Delta\martin(\path^*_i)(x^*_{i+1})\geq0$. Therefore, and because of Equation~\eqref{eq:Fnclosedform}, we have for all $r\geq m$ that
\begin{align*}
\martin(\path^*_r)
=\martin(\init)+\sum_{i=1}^{r}\Delta\martin(\path^*_{i-1})(x^*_i)
\geq\martin(\init)
+\sum_{i=1}^{m}\Delta\martin(\path^*_{i-1})(x^*_i)
\geq\martin(\path^*_m)
\end{align*}
and, consequently, that
\vspace{2pt}
\begin{align}
\limsup\martin(\path^*)=\limsup_{r\rightarrow+\infty}\martin(\path^*_r)\geq\martin(\path^*_m).
\label{eq:VequalFinequality}
\end{align}
Since $m\geq\ell(u)$ and $\path^*_m=\path_m$, $\path\in\cyl(u)$ implies that $\path^*\in\cyl(u)$. Combined with Equations~\eqref{eq:VequalWhulpvgl2} and~\eqref{eq:VequalFinequality}, this implies that $f(\path^*)\geq\martin(\path^*_m)$ and therefore, since $f$ is $m$-measurable and $\path_m^*=\path_m$, that $f(\path)\geq\martin(\path_m)$. Since this holds for all $\path\in\cyl(u)$, and because $\martin(u)\geq\alpha$, we obtain Equation~\eqref{eq:VequalWhulpvgl1}.
\end{proof}

\section{Continuity with respect to non-decreasing sequences}
\label{sec:non-decreasing}

Since the Williams and Ville-Vovk-Shafer natural extension coincide on $n$-measurable functions, we can ask ourselves whether this holds for limits of $n$-measurable functions as well. The following result establishes that for non-decreasing sequences of $n$-measurable real-valued functions, this is indeed the case. Furthermore, both extensions are continuous with respect to the convergence of such a sequence.

\begin{theorem}\label{theo:non-decreasing}
Let $\{f_n\}_{n\in\natswith}$ be a non-decreasing sequence of $n$-measurable real-valued functions on $\paths$ and let $f\coloneqq\lim_{n\rightarrow+\infty}f_n$ be their pointwise limit. Then for any situation $u$, we have that
\begin{equation}\label{eq:non-decreasing}
\lim_{n\rightarrow+\infty}\lexpW(f_n\vert u)
=\lim_{n\rightarrow+\infty}\lexpV(f_n\vert u)
=\lexpV(f\vert u)=\lexpW(f\vert u).
\end{equation}
\end{theorem}
\begin{proof}
Since $\{f_n\}_{n\in\natswith}$, is non-decreasing, it holds for all $k,n\in\natswith$ such that $k\leq n$ that $f_k\leq f_n\leq f$ and therefore also, using Equation~\eqref{eq:Williams}, that $\lexpW(f_k\vert u)\leq\lexpW(f_n\vert u)\leq\lexpW(f\vert u)$. By combining this with Propositions~\ref{prop:WversusV} and~\ref{prop:nWequalV}, we find that
\begin{equation*}
\lim_{n\rightarrow+\infty}\lexpV(f_n\vert u)
=\lim_{n\rightarrow+\infty}\lexpW(f_n\vert u)
\leq\lexpW(f\vert u)
\leq\lexpV(f\vert u).
\end{equation*} 
Hence, we are left to show that $\lim_{n\rightarrow+\infty}\lexpV(f_n\vert u)
\geq\lexpV(f\vert u)$. We will do so by proving that, for all $\alpha\in\reals$ such that $\alpha<\lexpV(f\vert u)$, $\lim_{n\rightarrow+\infty}\lexpV(f_n\vert u)
\geq\alpha$.

So consider any $\alpha\in\reals$ such that $\alpha<\lexpV(f\vert u)$. Fix any $\epsilon>0$. Then due to Equation~\eqref{eq:Ville}, there is some $\martin\in\martinsb$ such that $\martin(u)\geq\alpha$ and, for all $\path\in\cyl(u)$, $f(\path)\geq\limsup\martin(\path)$. 
Fix any $\path\in\cyl(u)$. Since $\martin$ is bounded from above, we then know that $\limsup\martin(\path)\neq+\infty$. We consider two cases: $\limsup\martin(\path)\in\reals$ and $\limsup\martin(\path)=-\infty$. If $\limsup\martin(\path)\in\reals$, it follows from Equation~\eqref{eq:deflimsup} that there is some $m\in\natswith$ such that $\sup_{n\geq m}\martin(\path_n)\leq\limsup\martin(\path)+\epsilon$, and therefore, since $\lim_{n\to+\infty}f_n(\path)=f(\path)$ and $f(\path)\geq\limsup\martin(\path)$, we find that there is some $n\geq\max\{m,\ell(u)\}$ such that $f_n(\path)\geq\martin(\path_n)-2\epsilon$. If $\limsup\martin(\path)=-\infty$, it follows from Equation~\eqref{eq:deflimsup} that there is some $m\in\natswith$ such that $\sup_{n\geq m}\martin(\path_n)\leq f_0(\path)$, and therefore, for any $n\geq \max\{m,\ell(u)\}$, we find that $\martin(\path_n)\leq f_0(\path)\leq f_n(\path)$. Hence, we find that there is some $n\geq\ell(u)$ such that $f_n(\path)\geq\martin(\path_n)-2\epsilon$. Since $\path\in\cyl(u)$ is arbitrary, we conclude that for every $\path\in\cyl(u)$, there is some $n\geq\ell(u)$ such that $f_n(\path)-\martin(\path_n)\geq-2\epsilon$. For any $\path\in\cyl(u)$, let $n^*(\path)$ be the first $n\geq\ell(u)$ for which this is the case. For any $\path\in\paths\setminus\cyl(u)$, let $n^*(\path)=\ell(u)$.

For any $\path\in\cyl(u)$, any $\path'\in\cyl(\path_{n^*(\path)})$ and any $\ell(u)\leq n\leq n^*(\path)$, we now have that $f_n(\path)-\martin(\path_n)=f_n(\path')-\martin(\path'_n)$ because $\path_n=\path_n'$ and because $f_n$ is $n$-measurable. Hence, for any $\path\in\cyl(u)$ and any $\path'\in\cyl(\path_{n^*(\path)})$, we find that $n^*(\path')=n^*(\path)$. Since this is clearly also true for any $\path\in\paths\setminus\cyl(u)$ and $\path'\in\cyl(\path_{n^*(\path)})$, we find that
\begin{equation}\label{eq:nstarconstant}
n^*(\path')=n^*(\path)
\text{~for any $\path\in\paths$ and any $\path'\in\cyl(\path_{n^*(\path)})$}
\vspace{3pt}
\end{equation}


For all $n\in\natswith$, we now define $C_n\coloneqq\{\path\in\paths\colon n^*(\path)\geq n\}$. Then for all $n_1,n_2\in\natswith$ such that $n_1\leq n_2$, we have that $C_{n_1}\supseteq C_{n_2}$. Assume \emph{ex absurdo} that, for all $n\in\natswith$, $C_n\neq\emptyset$, implying that $\sup_{\path\in\paths}n^*(\path)=+\infty$.  Then we have that
\begin{equation*}
\sup_{\path\in\paths}n^*(\path)=\sup_{x\in\values_1}\sup_{\path\in\cyl(x)}n^*(\path)=+\infty
\end{equation*}
and therefore, since $\values_1$ is finite, there is clearly some $x_1^*\in\values_1$ for which $\sup_{\path\in\cyl(x_1^*)}n^*(\path)=+\infty$.
Similarly, we also have that
\begin{equation*}
\sup_{\path\in\cyl(x_{1}^*)}n^*(\path)=\sup_{x\in\values_{2}}\sup_{\path\in\cyl(x_{1}^*x)}n^*(\path)=+\infty.
\end{equation*}
Hence, since $\values_{2}$ is finite, there is some $\smash{x^*_{2}\in\values_{2}}$ for which $\sup_{\path\in\cyl(x_{1}^*x_{2}^*)}n^*(\path)=+\infty$. By continuing in this way, we construct a path\footnote{Again, we implicitly adopt the Axiom of Dependent Choice.} 
$\path^*=x_1^*\dots x_{n}^*\dots$ for which
\begin{equation*}
\sup_{\path\in\cyl(\path_n^*)}n^*(\path)
=+\infty
\text{ for all $n\in\natswith$}.
\end{equation*}
However, because of Equation~\eqref{eq:nstarconstant}, we also know that
\begin{equation*}
\sup_{\path\in\cyl(\path^*_{n^*(\path^*)})}n^*(\path)
=\sup_{\path\in\cyl(\path^*_{n^*(\path^*)})}n^*(\path^*)
=n^*(\path^*)
\neq+\infty.
\end{equation*}
Since this is a contradiction, we conclude that there is some $n^*\in\natswith$ for which $C_{n^*}=\emptyset$ and therefore also, for all $\path\in\paths$, $n^*(\path)<n^*$.

Next, let $\martin^{*}$ be the unique real proces defined by $\martin^{*}(\init)=\martin(\init)-2\epsilon$ and
\begin{equation}\label{eq:selstar}
\Delta\martin^{*}(s)
\coloneqq
\begin{cases}
\Delta\martin(s) & \text{if $n^*(\path)>\ell(s)$ for all $\path\in\cyl(s)$}\\
0 & \text{otherwise}
\end{cases}
\text{~~~for all situations $s$.}
\end{equation}
Then clearly, $\martin^{*}$ is a martingale.

Consider now any $\path\in\paths$ and $n\in\natswith$. We then have that $\path\in\cyl(\path_n)$ and $\ell(\path_n)=n$. Therefore, if $n\geq n^*(\path)$, we infer from Equation~\eqref{eq:selstar} that $\Delta\martin^{*}(\path_n)=0$. If $n<n^*(\path)$, then for any $\path'\in\cyl(\path_n)$, we have that $n^*(\path')>n=\ell(\path_n)$. Indeed, assume \emph{ex absurdo} that there is some $\path'\in\cyl(\path_n)$ for which $n^*(\path')\leq n$. Then $\path'_{n^*(\path')}=\path_{n^*(\path')}$, implying that $\path\in\cyl(\path'_{n^*(\path')})$ and therefore, by Equation~\eqref{eq:nstarconstant}, that $n^*(\path)=n^*(\path')\leq n$. This is a contradiction. 
Using Equation~\eqref{eq:selstar}, we find that $\Delta\martin^{*}(\path_n)=\Delta\martin(\path_n)$. Hence, in summary:
\begin{equation}\label{eq:selstarpath}
\Delta\martin^{*}(\path_n)
=
\begin{cases}
\Delta\martin(\path_n) & \text{if $n<n^*(\path)$}\\
0 & \text{otherwise}
\end{cases}
\text{~~~for all $\path\in\paths$ and $n\in\natswith$,}
\end{equation}
which implies that
\begin{equation}\label{eq:selstarpath:sum}
\martin^{*}(\path_n)
=
\begin{cases}
\martin(\path_n)-2\epsilon & \text{if $n<n^*(\path)$}\\
\martin(\path_{n^*(\path)})-2\epsilon & \text{otherwise}
\end{cases}
\text{~~~for all $\path\in\paths$ and $n\in\natswith$.}
\end{equation}
Therefore, and because $\martin$ is bounded above, we find that $\martin^{*}$ is also bounded above, which implies that $\martin^{*}\in\martinsb$.

Consider now any $\path\in\cyl(u)$. Then for all $n\geq n^*$, since $n^*(\path)< n^*$, Equation~\eqref{eq:selstarpath:sum} implies that 
$\martin^{*}(\path_n)=\martin(\path_{n^*(\path)})-2\epsilon$. 
Hence, we also have that
\begin{equation}\label{eq:limsupstar}
\limsup\martin^{*}(\path)=\limsup_{n\rightarrow+\infty}\martin^{*}(\path_n)=\martin(\path_{n^*(\path)})-2\epsilon\leq f_{n^*(\path)}(\path)
\leq f_{n^*}(\path),
\end{equation}
where the first inequality follows from the definition of $n^*(\path)$, and the second inequality is a consequence of the non-decreasing nature of the sequence $\{f_n\}_{n\in\natswith}$, and the fact that $n^*(\path)<n^*$.

Since Equation~\eqref{eq:limsupstar} holds for all $\path\in\cyl(u)$, and since $\martin^{*}\in\martinsb$, it follows from Equation~\eqref{eq:Ville} that $\lexpV(f_{n^*}\vert u)\geq\martin^{*}(u)=\martin(u)-2\epsilon\geq\alpha-2\epsilon$. Because $\{f_n\}_{n\in\natswith}$ is non-decreasing, we infer from Equation~\eqref{eq:Ville} that $\{\lexpV(f_n\vert u)\}_{n\in\natswith}$ is non-decreasing as well. Therefore, $\lexpV(f_{n^*}\vert u)\geq\alpha-2\epsilon$ implies that $\lim_{n\rightarrow+\infty}\lexpV(f_n\vert u)\geq\alpha-2\epsilon$. Since this holds for any $\epsilon>0$, we finally obtain that $\lim_{n\rightarrow+\infty}\lexpV(f_n\vert u)\geq\alpha$, as desired.
\end{proof}

\section{Continuity with respect to non-increasing sequences}
\label{sec:non-increasing}

The proof of the result in the previous section crucially hinges on the fact that $\{f_n\}_{n\in\natswith}$ is a non-decreasing sequence. For non-increasing sequences of $n$-measurable real-valued functions, the Williams natural extension is not as well-behaved, as it is not necessarily continuous with respect to the pointwise convergence of such sequences. For the Ville-Vovk-Shafer natural extension, however, results analogous to those in Theorem~\ref{theo:non-decreasing} can be obtained, provided that $\{f_n\}_{n\in\natswith}$ converges to a bounded function. Whether this assumption of boundedness is crucial in order for the result to hold, is an open question. 

\begin{theorem}\label{theo:non-increasing}
Let $\{f_n\}_{n\in\natswith}$ be a non-increasing sequence of $n$-measurable real-valued functions on $\paths$ and let $f\coloneqq\lim_{n\rightarrow+\infty}f_n$ be their pointwise limit. If $f$ is bounded, then for any situation $u$, we have that
\begin{equation}\label{eq:non-increasing}
\lim_{n\rightarrow+\infty}\lexpV(f_n\vert u)
=\lexpV(f\vert u).
\vspace{3pt}
\end{equation}
\end{theorem}
\begin{proof}
Since $f$ is bounded, there is some $B\in\reals$ such that $f\geq B$. Since the constant martingale $\martin\coloneqq B$ clearly belongs to $\martinsb$, it follows from Equation~\eqref{eq:Ville} that $\lexpV(B\vert u)\geq B$.
Since $\{f_n\}_{n\in\natswith}$ is non-increasing, it holds for all $k,n\in\natswith$ such that $k\leq n$ that $f_k\geq f_n\geq f\geq B$ and therefore also, using Equation~\eqref{eq:Ville}, that $\lexpV(f_k\vert u)\geq\lexpV(f_n\vert u)\geq\lexpV(f\vert u)\geq\lexpV(B\vert u)\geq B$. Therefore, we find that $\{\lexpV(f_n\vert u)\}_{n\in\natswith}$ is a non-increasing sequence and that
\vspace{-6pt}
\begin{equation*}
\lim_{n\rightarrow+\infty}\lexpV(f_n\vert u)
\geq\lexpV(f\vert u)\geq B,
\end{equation*}
implying that we are left to prove that $\lim_{n\rightarrow+\infty}\lexpV(f_n\vert u)
\leq\lexpV(f\vert u)$. Choose any $\alpha\in\reals$ such that $\alpha<\lim_{n\to+\infty}\lexpV(f_n\vert u)$.

Fix $\epsilon>0$, let $\underline{\beta}\coloneqq\min\{B,\alpha\}$ and let $\overline{\beta}$ be the value of the constant function $f_0$.

Fix any $n\in\natswith$. Then due to Equation~\eqref{eq:Ville}, since $\alpha<\lexpV(f_n\vert u)$, there is a submartingale $\martin'_n\in\martinsb$ such that $\martin'_n(u)\geq\alpha$ and, for all $\path\in\cyl(u)$, $f_n(\path)\geq\limsup\martin'_n(\path)$. For any situation $s$, we now let
\begin{equation}\label{eq:martinn}
\martin_n(s)\coloneqq
\begin{cases}
\martin'_n(u)
&\text{if $u\not\sprecedes s$}\\
\martin'_n(s)
&\text{if $u\sprecedes s$ and $\martin'_n(s')>B$ for all $u\sprecedes s'\precedes s$}\\
B
&\text{otherwise.}
\end{cases}
\vspace{10pt}
\end{equation}
Since $\martin'_n(u)\geq\alpha$, we then find that $\martin_n(u)\geq\alpha$ and that $\martin_n\geq\underline{\beta}$. Furthermore, for all $\path\in\cyl(u)$, we have that 
\vspace{3pt}
\begin{equation*}
\limsup\martin_n(\path)
\leq
\max\{\limsup\martin'_n(\path),B\}
\leq
\max\{ f_n(\path),B\}
\leq
f_n(\path),
\vspace{3pt}
\end{equation*}
where the last inequality holds because $f_n(\path)\geq f(\path)\geq B$. Next, we show that $\martin_n\in\martinsb$. Fix any situation $s$. We consider three cases. If $u\sprecedes s$ and $\martin'_n(s')>B$ for all $u\sprecedes s'\precedes s$, then $\martin_n(s)=\martin'_n(s)$ and $\martin_n(sx)\geq\martin'_n(sx)$ for all $x\in\values_{\ell(s)+1}$, and therefore $\Delta\martin_n(s)\geq\Delta\martin'_n(s)$. Hence, since $\underline{Q}(\cdot\vert s)$ is a lower expectation functional, we find that
\vspace{3pt}
\begin{align*}
\underline{Q}(\Delta\martin_n(s)\vert s)
&\geq
\underline{Q}(\Delta\martin'_n(s)\vert s)
+
\underline{Q}(\Delta\martin_n(s)-\Delta\martin'_n(s)\vert s)\\
&\geq
\underline{Q}(\Delta\martin'_n(s)\vert s)
+
\inf\big(\Delta\martin_n(s)-\Delta\martin'_n(s)\big)
\geq\underline{Q}(\Delta\martin'_n(s)\vert s)\geq0,\\[-10pt]
\end{align*}
where the first inequality follows from the superlinearity of $\underline{Q}(\cdot\vert s)$, the second inequality follows from the fact that $\underline{Q}(\cdot\vert s)$ dominates the infimum, the third inequality follows because $\Delta\martin_n(s)\geq\Delta\martin'_n(s)$, and the last inequality follows because $\martin'$ is a submartingale.
If $s=u$, then $\martin_n(s)=\martin'_n(s)$ and $\martin_n(sx)\geq\martin'_n(sx)$ for all $x\in\values_{\ell(s)+1}$, and therefore $\Delta\martin_n(s)\geq\Delta\martin'_n(s)$. Hence, as before, we find that $\underline{Q}(\Delta\martin_n(s)\vert s)\geq0$. In all other cases, we have that $\martin_n(s)=\martin_n(sx)$ for all $x\in\values_{\ell(s)+1}$, and therefore $\Delta\martin_n(s)=0$. Since $\underline{Q}(\cdot\vert s)$ dominates the infimum, this implies that $\underline{Q}(\Delta\martin_n(s)\vert s)\geq0$. Hence, in all cases, we find that $\underline{Q}(\Delta\martin_n(s)\vert s)\geq0$. Since this is true for every situation $s$, it follows that $\martin_n\in\martins$. Furthermore, since $\martin'_n\in\martinsb$, there is some $B'\in\reals$ such that $\martin'_n\leq B'$. Because of Equation~\eqref{eq:martinn}, this implies that $\martin_n\leq\max\{B,B'\}$, which in turn implies that $\martin_n$ is bounded above. Therefore, since $\martin_n\in\martins$, we find that $\martin_n\in\martinsb$.

Hence, in conclusion, we find that for every $n\in\natswith$, there is some $\martin_n\in\martinsb$ such that $\martin_n(u)\geq\alpha$, $\martin_n\geq\underline{\beta}$ and, for all $\path\in\cyl(u)$, $\limsup\martin_n(\path)\leq f_n(\path)$.

Consider now any $k,n\in\natswith$ such that $k\leq n$ and any $\path\in\cyl(u)$.
Let $s_k\coloneqq\path_k$. Since $\martin_n\in\martinsb$, 
we can use an argument similar to the one used in the proof of Proposition~\ref{prop:nWequalV} to construct a path $\path^*=s_kx^*_{k+1}x^*_{k+2}\ldots x^*_n\ldots$ for which, for all $m\geq k$, $\martin_n(\path^*_m)\geq\martin_n(\path_k)$
and therefore also  
\begin{align}
\martin_n(\path_k)
\leq
\limsup_{m\rightarrow+\infty}\martin_n(\path^*_m)
=\limsup\martin_n(\path^*)
\leq f_n(\path^*)
\leq f_k(\path^*)
=f_k(\path),
\label{eq:knpath}
\end{align}
where the last equality holds because $f_k$ is $k$-measurable and because $\path^*_k=s_k=\path_k$.

Next, we consider an extended real-valued process $\proc$, defined by
\vspace{2pt}
\begin{equation}\label{eq:defF}
\proc(s)\coloneqq\limsup_{n\rightarrow+\infty}\martin_n(s)\geq\underline{\beta}
\text{ for all situations $s$.}
\vspace{2pt}
\end{equation}
For any $\path\in\cyl(u)$ and any $k\in\natswith$, we then have that
\begin{equation}\label{eq:Fpathk}
\proc(\path_k)
=\limsup_{n\rightarrow+\infty}\martin_n(\path_k)
\leq f_k(\path)\leq f_0(\path)=\overline{\beta},
\end{equation}
where the first inequality follows because Equation~\eqref{eq:knpath} holds for every $n\geq k$. Using Equation~\eqref{eq:Fpathk}, we now find that, for all $\path\in\cyl(u)$,
\begin{equation}\label{eq:cyluinequality}
\limsup\proc(\path)
=\limsup_{k\rightarrow+\infty}\proc(\path_k)
\leq
\limsup_{k\rightarrow+\infty}f_k(\path)
=f(\path),
\end{equation}
where the last equality holds because $\{f_k\}_{k\in\natswith}$ converges pointwise to $f$. 

We now construct an almost-desirable selection $\sel\in\sels$.
Consider any situation $s$. We first consider the case $u\precedes s$. It then follows from Equations~\eqref{eq:defF} and~\eqref{eq:Fpathk} that $\proc(s)\in\reals$ and, for all $x\in\values_{\ell(s)+1}$, that $\proc(sx)\in\reals$. Hence, for any $x\in\values_{\ell(s)+1}$, since $\proc(sx)=\limsup_{n\rightarrow+\infty}\martin_n(sx)$, we find that there is some $n_x(s)\in\natswith$ for which, for all $n\geq n_x(s)$, $\proc(sx)\geq\martin_n(sx)-\nicefrac{\nu(s)}{2}$, with $\nu(s)\coloneqq\epsilon2^{-(\ell(s)+1)}>0$. Because $\values_{\ell(s)+1}$ is finite, we can define $n_{\max}(s)\coloneqq\max\{n_x(s)\colon x\in\values_{\ell(s)+1}\}$. Since $\proc(s)=\limsup_{n\rightarrow+\infty}\martin_n(s)$, there is some $n^*(s)\geq n_{\max}(s)$ such that $\martin_{n^*(s)}(s)\geq\proc(s)-\nicefrac{\nu(s)}{2}$. If we now define $\sel(s)\coloneqq\Delta\martin_{n^*(s)}(s)$, then for all $x\in\values_{\ell(s)+1}$, since $n^*(s)\geq n_{\max}(s)\geq n_{x}(s)$, we find that
\begin{align*}
\Delta\proc(s)(x)
=\proc(sx)-\proc(s)
&\geq
\Big(\martin_{n^*(s)}(sx)-\frac{\nu(s)}{2}\Big)+\Big(-\martin_{n^*(s)}(s)-\frac{\nu(s)}{2}\Big)
\\
&=
\martin_{n^*(s)}(sx)-\martin_{n^*(s)}(s)-\nu(s)
=
\sel(s)(x)-\nu(s).\\[-10pt]
\end{align*}
Furthermore, since $\martin_{n^*(s)}\in\martinsb$, we also know that $\underline{Q}(\sel(s)\vert s)\geq0$. We next consider the case $u\not\precedes s$. In that case, we let $\sel(s)\coloneqq0$. Here too, since $\underline{Q}(\cdot\vert s)$ is a lower expectation functional, we have that $\underline{Q}(\sel(s)\vert s)\geq0$. Therefore, it follows that $\sel$ indeed belongs to $\sels$. Now let $\martin\coloneqq\proc(u)-\epsilon+\proc^{\sel}$. Since we know from Equations~\eqref{eq:defF} and~\eqref{eq:Fpathk} that $\proc(u)\in\reals$, and since $\sel\in\sels$, we know that $\martin\in\martins$.

Consider now any situation $s=x_1\dots x_n$. If $u\not\sprecedes s$, we find that
\begin{equation}
\martin(s)=\proc(u)-\epsilon+\sum_{i=0}^{n-1}\sel(x_1\dots x_i)(x_{i+1})=\proc(u)-\epsilon\leq\overline{\beta},\label{eq:unotsprecedess}
\end{equation}
using Equation~\eqref{eq:Fpathk} for the last inequality. If $u\sprecedes s$, then
\begin{align}
\martin(s)&=\proc(u)-\epsilon+\sum_{i=0}^{n-1}\sel(x_1\dots x_i)(x_{i+1})\notag\\
&=\proc(u)-\epsilon+\sum_{i=\ell(u)}^{n-1}\sel(x_1\dots x_i)(x_{i+1})\notag\\
&\leq\proc(u)-\epsilon+\sum_{i=\ell(u)}^{n-1}\Delta\proc(x_1\dots x_i)(x_{i+1})+\sum_{i=\ell(u)}^{n-1}\nu(x_1\dots x_i)\notag\\
&=\proc(s)-\epsilon+\sum_{i=\ell(u)}^{n-1}\nu(x_1\dots x_i)\notag\\
&=\proc(s)-\epsilon+\sum_{i=\ell(u)}^{n-1}\epsilon2^{-(i+1)}
\leq\proc(s)-\epsilon+\sum_{i\in\natswith}\epsilon2^{-(i+1)}
=\proc(s)
\leq\overline{\beta},\label{eq:betaepsilon}\\[-10pt]\notag
\end{align}
again using Equation~\eqref{eq:Fpathk} for the last inequality. Hence, for every situation $s$, we find that $\martin(s)\leq\overline{\beta}$. This implies that $\martin$ is bounded above, and therefore, that $\martin\in\martinsb$.

Consider now any path $\path=x_1\ldots x_n\ldots\in\cyl(u)$. Then for every $m>\ell(u)$, since $u\sprecedes\path_m$, we know from Equation~\eqref{eq:betaepsilon} that $\martin(\path_m)\leq\proc(\path_m)$. Therefore, we find that
\begin{equation*}
\limsup\martin(\path)
=\limsup_{m\rightarrow+\infty}\martin(\path_m)
\leq\limsup_{m\rightarrow+\infty}\proc(\path_m)
=\limsup\proc(\path)\leq f(\path),
\end{equation*}
using Equation~\eqref{eq:cyluinequality} for the last inequality.
Since this holds for all $\path\in\cyl(u)$, and because $\martin\in\martinsb$, it follows from Equation~\eqref{eq:Ville} that $\lexpV(f\vert u)\geq\martin(u)$, which implies that
\begin{equation*}
\lexpV(f\vert u)\geq\martin(u)=\proc(u)-\epsilon=\limsup_{n\to+\infty}\martin_n(u)-\epsilon\geq\alpha-\epsilon,
\end{equation*}
where the first equality follows from Equation~\eqref{eq:unotsprecedess}, where the second equality follows from Equation~\eqref{eq:defF}, and where the last inequality follows from the fact that, for all $n\in\natswith$, $\martin_n(u)\geq\alpha$. Hence, we find that $\lexpV(f\vert u)\geq\alpha-\epsilon$. Since $\epsilon>0$ is arbitrary, this implies that $\lexpV(f\vert u)\geq\alpha$. Since this is true for any $\alpha\in\reals$ such that $\alpha<\lim_{n\to+\infty}\lexpV(f_n\vert u)$, we conclude that $\lexpV(f\vert u)\geq\lim_{n\to+\infty}\lexpV(f_n\vert u)$.
\end{proof}

\section*{Acknowledgements}
The results in this note have benefited from discussions with Enrique Miranda, Matthias C. M. Troffaes and Arthur Van Camp. Special thanks to Matthias C. M. Troffaes for proofreading the initial part of an earlier version.

\bibliographystyle{plain} 

\end{document}